\newcommand{\bel}[1]{\begin{equation}\label{#1}}
\newcommand{\be}{\begin{equation}}
\newcommand{\ba}{\begin{eqnarray}}
\newcommand{\ea}{\end{eqnarray}}
\newcommand{\qe}{\end{equation}}
\newcommand{\R}{{\mathbb R}}
\newcommand{\N}{{\mathbb N}}
\newcommand{\Z}{{\mathbb Z}}
\newcommand{\C}{{\mathbb C}}
\newcommand{\Deg}{\mathrm{Deg}}
\newcommand{\Hmm}[1]{\leavevmode{\marginpar{\tiny%
$\hbox to 0mm{\hspace*{-0.5mm}$\leftarrow$\hss}%
\vcenter{\vrule depth 0.1mm height 0.1mm width \the\marginparwidth}%
\hbox to
0mm{\hss$\rightarrow$\hspace*{-0.5mm}}$\\\relax\raggedright #1}}}
\newtheorem{theorem}{Theorem}[section]
\newtheorem{lemma}[theorem]{Lemma}
\newtheorem{definition}[theorem]{Definition}
\newtheorem{conv}[theorem]{Assumption}
\newtheorem{remark}[theorem]{Remark}
\newtheorem{prop}[theorem]{Proposition}
\newtheorem{example}[theorem]{Example}
\begin{document}

\title[Uniqueness class to a class of linear evolution equations]{Uniqueness class of solutions to a class of linear evolution equations}

\author{Fengwen Han}
\address{Fengwen, Han: School of Mathematics and Statistics, Henan University, 475004 Kaifeng, Henan, China}
\email{\href{mailto:fwhan@outlook.com}{fwhan@outlook.com}}

\author{Bobo Hua}
\address{Bobo Hua: School of Mathematical Sciences, LMNS, Fudan University, Shanghai 200433, China; Shanghai Center for Mathematical Sciences, Fudan University, Shanghai 200433, China}
\email{\href{mailto:bobohua@fudan.edu.cn}{bobohua@fudan.edu.cn}}

\begin{abstract}
In this paper, we study the wave equation on infinite graphs. On one hand, in contrast to the wave equation on manifolds, we construct an example for the non-uniqueness for the Cauchy problem of the wave equation on graphs. On the other hand, we obtain a sharp uniqueness class for the solutions of the wave equation. The result follows from the time analyticity of the solutions to the wave equation in the uniqueness class. In the last part, we extend the result to a wide class of linear evolution equations.

\end{abstract}

\maketitle

\section{Introduction}\label{sec:intro}
Wave equations on Euclidean spaces, or Riemannian manifolds, play important roles in partial differential equations, Riemannian geometry, mathematical physics, etc.; see \cite{evans2010partial, hormander1997lectures, taylor2011partial, carroll2004an}. For a complete Riemannian manifold $M,$ the Cauchy problem of the wave equation reads as
\begin{align}\label{eq:wave1}
\left\{\begin{array}{lr}
\partial_{t}^{2} u(t, x)-\Delta u(t, x)=f(t, x), & (t, x) \in(-T, T) \times M, \\
u(0, x)=g(x), & x \in M, \\
\partial_{t} u(0, x)=h(x), & x \in M, \\
\end{array}\right.
\end{align} where $T\in(0,\infty],$ $\Delta$ is the Laplace-Beltrami operator on $M,$ $f,g$ and $h$ are appropriate functions. As is well-known, the strong solution, i.e. the $C^2$ solution, to the above equation \eqref{eq:wave1} is unique. 

Discrete analogs of partial differential equations on graphs, in particular elliptic and parabolic equations, have been extensively studied in recent years; see \cite{ grigor'yan2018introduction, barlow2017random}. In the first part of this paper, we study the wave equation on graphs. 

We recall the setting of weighted graphs. Let $(V,E)$ be a locally finite, connected, simple, undirected graph with the set of vertices $V$ and the set of edges $E$.  Two vertices $x,y$ are called neighbors if there is an edge connecting $x$ and $y$, i.e. $\{x,y\}\in E$, denoted by $x\sim y$. We denote by $d(x,y)$ the combinatorial distance between vertices $x$ and $y,$ i.e. the minimal number of edges in a path among all paths connecting $x$ and $y.$ Let
\[
\omega: E\rightarrow \mathbb (0,\infty),\ \{x,y\}\mapsto \omega_{xy}=\omega_{yx}
\] be the edge weight function,
\[
\mu:V\to (0,\infty),\ x\mapsto \mu_x
\]
be the vertex weight function. We call the quadruple $G=(V, E, \mu,\omega)$ a \emph{weighted graph} (or a network).

For a weighted graph $G=(V, E, \mu,\omega)$, the Laplacian of $G$ is defined as, for any function $u: V\rightarrow \mathbb R$,
\[
\Delta u(x):=\sum_{y\sim x}\frac{\omega_{xy}}{\mu_x}\left(u(y)-u(x)\right).
\] For any vertex $x$, the (weighted) degree of vertex $x$ is defined as 
\[
\Deg(x):=\sum_{y\sim x}\frac{\omega_{xy}}{\mu_x}.
\] 
The Laplacian is a bounded operator on $\ell^2(V,\mu),$ the $\ell^2$-summable functions w.r.t. the measure $\mu$, if and only if $\sup_{x\in V} \Deg(x)<\infty$; see \cite{keller2012dirichlet}. 

For any $\Omega\subset V,$ we denote by $$\delta\Omega:=\{y\in V\setminus \Omega: \exists x\in \Omega, y\sim x\}$$ the vertex boundary of $\Omega.$ We write $\overline{\Omega}:=\Omega\cup \delta\Omega.$

Let $I$ be an interval of $\R$. For $k\in\N_0\cup\{\infty\},$ we write $u\in C^k_t(I \times \Omega)$ if $u:I \times {\Omega}\to\R$ and $u(\cdot, x)\in C^k(I)$ for any $x\in \Omega.$ 

\begin{definition} For $\Omega\subset V$ and an interval $I$ of $\R$ containing $0,$ we say that $u:I \times \overline{\Omega}\to\R$ is a (strong) solution to the wave equation on $I \times \Omega$ (with Dirichlet boundary condition) if $u\in C^2_t(I \times \Omega)$ and $u$ satisfies 
\begin{align}\label{eq:wavegraph1}
\left\{\begin{array}{lr}
\partial_{t}^{2} u(t, x)-\Delta u(t, x)=f(t, x), & (t, x) \in I \times \Omega, \\
u(0, x)=g(x), & x \in \Omega, \\
\partial_{t} u(0, x)=h(x), & x \in \Omega,\\
u(t, x)=0, & (t, x) \in I \times \delta \Omega,
\end{array}\right. 
\end{align} where $f\in C^0_t(I \times \Omega),$ $g,h:\Omega\to\R.$ For $\Omega=V,$ the last condition is not required. For $f\equiv 0,$ it is called the solution to the homogeneous wave equation on $I \times \Omega.$
 \end{definition}

Lin and Xie \cite{lin2019the} proved the existence and uniqueness of solutions for the wave equation on a finite subset $\Omega$ of $V.$ In this paper, we consider the uniqueness problem for the solutions to the wave equation \eqref{eq:wavegraph1} on the whole graph.

For Riemannian manifolds, one of the fundamental properties for the solutions of the wave equation is
the finite propagation speed. By the finite propagation speed property, the solution to the wave equation \eqref{eq:wave1} is unique. For graphs, Friedman and Tillich \cite[p.249]{friedman2004wave} proved that the wave equation does not have the finite propagation speed property. 
In contrast to the uniqueness result for the wave equation on manifolds, we construct an example to show the non-uniqueness for the solutions to the Cauchy problem of the wave equation on graphs.
\begin{theorem}\label{thm:exam} (See Theorem \ref{sharp-example}) For the infinite line graph $\Z$ with unit weights, there is a nontrivial solution $u(t,x)\not\equiv 0$ to the wave equation \eqref{eq:wavegraph1} for $\Omega=\Z,$ $f,g,h\equiv 0.$\end{theorem}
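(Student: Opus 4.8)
The plan is to exploit the fact that, on the line graph, the spatial structure of the wave equation is a second-order recursion in $x$ whose ``Cauchy data'' are the two columns $u(\cdot,0)$ and $u(\cdot,1)$. Writing the equation at a vertex $x$ (recall $\Delta u(t,x)=u(t,x+1)+u(t,x-1)-2u(t,x)$ on $\Z$ with unit weights) as
\[
u(t,x+1)=\partial_t^2 u(t,x)+2u(t,x)-u(t,x-1),
\]
one sees that prescribing $u(\cdot,0)$ and $u(\cdot,1)$ arbitrarily, as smooth functions of $t$, determines $u(\cdot,x)$ for every $x\in\Z$: propagate to the right using the equations at $x=1,2,3,\dots$ and to the left using the equations at $x=0,-1,-2,\dots$. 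Each vertex equation is used exactly once to define one new value, so there is no compatibility obstruction, and by construction the resulting $u$ satisfies the homogeneous wave equation at every vertex of $\Z$.

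First I would observe that such a counterexample is necessarily non-analytic in $t$: if $u$ were analytic in time near $0$, the equation forces $\partial_t^{2j}u(0,x)=\Delta^j g$ and $\partial_t^{2j+1}u(0,x)=\Delta^j h$, so $g\equiv h\equiv 0$ would give $u\equiv 0$. This is exactly the mechanism behind the uniqueness results of the paper, so the construction must live outside the analytic (and hence outside the uniqueness) class. Accordingly, I would take a smooth function flat at the origin, for instance
\[
\phi(t)=
\begin{cases}
e^{-1/t^{2}}, & t\neq 0,\\
0, & t=0,
\end{cases}
\]
which is $C^\infty$ on $\R$ with $\phi^{(n)}(0)=0$ for all $n\geq 0$, and set the seed data $u(t,0)=\phi(t)$ and $u(t,1)=0$.

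Propagating as above, each $u(t,x)$ is a finite linear combination of the even time-derivatives $\partial_t^{2j}\phi(t)$, with the order growing as $|x|$ increases; in particular $u(\cdot,x)\in C^\infty(\R)\subset C^2(\R)$, so $u\in C^2_t(\R\times\Z)$ is a genuine strong solution with $f\equiv 0$. Since every derivative of $\phi$ vanishes at $t=0$, so does every time-derivative of each $u(\cdot,x)$, whence $u(0,x)=0$ and $\partial_t u(0,x)=0$ for all $x$, i.e. $g\equiv h\equiv 0$. Finally $u(t,0)=\phi(t)\not\equiv 0$, so $u\not\equiv 0$, which is the desired nontrivial solution. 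The only point requiring genuine care is verifying that the outward recursion yields a single consistent solution covering all of $\Z$ without overdetermination; once this is checked, non-uniqueness follows immediately, the essential input being the non-analyticity of $\phi$ in $t$ together with the unbounded growth in $|x|$ that places $u$ outside any reasonable uniqueness class.
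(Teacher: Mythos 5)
Your proposal is correct, and it proves the statement by a genuinely different (and more elementary) route than the paper. The paper follows Tychonoff's and Huang's construction for the heat equation: it writes down an explicit closed-form series
\[
u(t,x)=g(t)+\sum_{k=1}^\infty\frac{g^{(2k)}(t)}{(2k)!}(x+k)\cdots (x+1)x\cdots (x-k+1),\qquad x\geq 1,
\]
with $g(t)=e^{-t^{-\beta}}$ for $t>0$ and $g(t)=0$ for $t\leq 0$, extended symmetrically about $x=-\tfrac12$; well-definedness follows because the product kills all terms with $k>x$, and the wave equation, the vanishing initial data, and the non-analyticity are then verified on this formula. Your argument instead treats the wave equation on $\Z$ as a second-order recursion in the spatial variable: prescribing the two columns $u(\cdot,0)=\phi$, $u(\cdot,1)=0$ and propagating outward uses each vertex equation exactly once to define one new column, so consistency and smoothness are immediate by induction, and flatness of $\phi$ at $t=0$ gives $g\equiv h\equiv 0$ while $u(t,0)=\phi\not\equiv 0$ gives nontriviality. (Your check of the key points is sound: each column is a constant-coefficient linear combination of even $t$-derivatives of $\phi$, hence smooth and flat at $t=0$.) What your approach buys is simplicity and flexibility: any smooth seed that is flat at $t=0$ works, no convergence or combinatorial identity needs checking, and the argument visibly generalizes. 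What the paper's explicit series buys, and your construction does not, is quantitative control: the closed form allows the growth estimate $|u(t,x)|\leq \exp[(2+\epsilon)|x|\ln|x|]$ of Theorem~\ref{sharp-example}, which is exactly what is needed to show that the exponent $2-\alpha$ in the uniqueness class $\mathcal{M}_T$ is sharp. Your recursive solution grows in $|x|$ in a way you do not (and for this statement need not) estimate, so it establishes Theorem~\ref{thm:exam} but not the sharpness claim that the paper extracts from the same example.
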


Motivated by the above example, it is natural to ask the uniqueness class of the wave equation, i.e. the set of the solutions to the wave equation which possess the uniqueness property.
The uniqueness class of the heat equation on manifolds and graphs was extensively studied in the literature; see \cite{grigor'yan1986stochastically, grigor'yan1999analytic, huang2012uniqueness, huang2018uniqueness}.

For a weighted graph $G=(V, E, \mu,\omega),$ we fix a reference vertex $p\in V.$ In this paper, we always consider graphs satisfying the following assumption. 
\begin{conv}\label{def:ass} Let $G=(V, E, \mu,\omega)$ be a weighted graph satisfying that there exist constants $D>0,\ 0\leq \alpha\leq 2$ such that
\begin{equation}\label{deg-growth-cond}\Deg(x)\leq Dd(x,p)^\alpha, \ \ \forall x\in V, x\neq p.
\end{equation}
\end{conv} Note that this class of graphs includes many graphs with unbounded Laplacians. We introduce the following class of functions $\mathcal{M}_T$ for $T\in (0,\infty]$: $u(t,x)\in \mathcal{M}_T$ if $u:(-T, T) \times V\to \R$ and there exists a constant $C$ such that
\begin{equation}\label{eq:ass1}
|u(t,x)|\leq Cd(x,p)^{(2-\alpha) d(x,p)},\ \forall (t,x) \in (-T,T)\times V,\ x\neq p.
\end{equation}
We prove that $\mathcal{M}_T$ is a uniqueness class for the solutions of the wave equation on graphs.
\begin{theorem}\label{thm:main1} Let $G=(V,E,\mu,\omega)$ be a weighted graph satisfying Assumption~\ref{def:ass}. For $T\in (0,\infty],$ suppose that $u,v\in \mathcal{M}_T$ are solutions of the wave equation on $(-T,T)\times V$ with same data $f,g,h.$ Then $u\equiv v.$ \end{theorem}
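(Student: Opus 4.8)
The plan is to argue by linearity and reduce to a vanishing statement, and then to prove that solutions in $\mathcal{M}_T$ are real-analytic in time with a radius of convergence bounded below \emph{uniformly} in the space variable. Set $w:=u-v$. By linearity $w$ solves the homogeneous wave equation on $(-T,T)\times V$ with $g\equiv h\equiv 0$, and since the defining bound \eqref{eq:ass1} is additive, $w\in\mathcal{M}_T$. Because the graph is locally finite, $\Delta w(t,\cdot)(x)$ is a finite combination of the $w(t,y)$, so $\Delta$ commutes with $\partial_t$; feeding $\partial_t^2 w=\Delta w$ back into itself bootstraps the a priori $C^2_t$ regularity of $w$ up to $C^\infty_t((-T,T)\times V)$. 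We obtain the key identities $\partial_t^{2k}w=\Delta^k w$ and $\partial_t^{2k+1}w=\Delta^k\partial_t w$ for all $k$. Evaluating at $t=0$ and using $w(0,\cdot)=\partial_t w(0,\cdot)=0$ shows that \emph{every} time-derivative of $w$ vanishes at $t=0$; thus, once time-analyticity is established, the Taylor expansion forces $w(\cdot,x)\equiv 0$.

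The heart of the argument is a quantitative bound on the iterated Laplacian. Writing $B(x,k)=\{y\in V: d(x,y)\le k\}$, the one-step estimate $|\Delta\phi(z)|\le 2\Deg(z)\max_{B(z,1)}|\phi|$ iterates to
\[
|\Delta^k\phi(x)|\le\Big(2\max_{B(x,k)}\Deg\Big)^k\,\max_{B(x,k)}|\phi|.
\]
Applying this to $\phi=w(t,\cdot)$ with $m:=d(x,p)$, Assumption~\ref{def:ass} gives $\max_{B(x,k)}\Deg\le D(m+k)^\alpha$ (up to adjusting the constant near $p$), while the monotone bound \eqref{eq:ass1} gives $\max_{B(x,k)}|w(t,\cdot)|\le C(m+k)^{(2-\alpha)(m+k)}$. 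Hence
\[
|\partial_t^{2k}w(t,x)|\le C\,\big(2D(m+k)^\alpha\big)^k\,(m+k)^{(2-\alpha)(m+k)} .
\]
The decisive point is that the two spatial growth rates are tuned so that $\alpha+(2-\alpha)=2$: taking $k$-th roots and comparing with Stirling's formula for $(2k)!$, one finds $\big(|\partial_t^{2k}w(t,x)|/(2k)!\big)^{1/k}\to \tfrac{1}{2}De^2$, a finite limit \emph{independent of $m$}. The odd derivatives are handled the same way: integrating $\partial_t^{2k+1}w(t,x)=\int_0^t\Delta^{k+1}w(s,x)\,ds$ and inserting the same bound yields the identical asymptotic rate. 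Consequently there is $r_0>0$, depending only on $D,\alpha$, such that for every $x$ and every compact $J\subset(-T,T)$ one has $|\partial_t^k w(t,x)|\le A_{x,J}\,k!/r_0^{\,k}$ for all $t\in J$ and all $k$; that is, $w(\cdot,x)$ is real-analytic on $(-T,T)$ with radius of convergence at least $r_0$ at every point.

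With uniform analyticity in hand the conclusion is a continuation argument. Near $t=0$ all derivatives vanish, so $w(t,x)=0$ for $|t|<r_0$ and all $x$. I would then set $T^\ast:=\sup\{\tau\in(0,T]: w\equiv 0\text{ on }(-\tau,\tau)\times V\}\ge r_0$ and suppose $T^\ast<T$. By continuity of $w$ and $\partial_t w$, both vanish at $t_0:=T^\ast$ for every $x$; the identities $\partial_t^{2k}w(t_0,\cdot)=\Delta^k w(t_0,\cdot)$ and $\partial_t^{2k+1}w(t_0,\cdot)=\Delta^k\partial_t w(t_0,\cdot)$ then make \emph{all} derivatives at $t_0$ vanish, and analyticity with radius $\ge r_0$ forces $w(t,x)=0$ for $|t-t_0|<r_0$, contradicting the maximality of $T^\ast$. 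Hence $T^\ast=T$, so $w\equiv 0$ and $u\equiv v$.

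The main obstacle is the estimate of the second paragraph: controlling $\Delta^k$ acting on a function that grows super-exponentially in $d(\cdot,p)$, over a ball whose radius grows with $k$. Everything hinges on the exact matching between the degree exponent $\alpha$ and the growth exponent $2-\alpha$ in \eqref{eq:ass1}, which is precisely what produces $(2k)!$ and hence a \emph{uniform} (in $x$) positive radius of convergence; a faster growth bound would let the radius shrink to $0$ as $d(x,p)\to\infty$, breaking the continuation step. This is the discrete, second-order-in-time analogue of the Täcklind-type uniqueness classes used for the heat equation in \cite{huang2012uniqueness, huang2018uniqueness}.
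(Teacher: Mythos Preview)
Your proof is correct and follows essentially the same route as the paper: reduce to $w=u-v$, establish time-analyticity via the iterated-Laplacian bound (the paper's Lemma~\ref{lemma:lap-est}) combined with Assumption~\ref{def:ass} and the growth condition \eqref{eq:ass1}, observe that all time-derivatives of $w$ vanish at $t=0$, and conclude by unique continuation. The only noteworthy deviation is that you bound odd-order time derivatives via the integral identity $\partial_t^{2k+1}w(t,x)=\int_0^t\Delta^{k+1}w(s,x)\,ds$ (valid here because $w$ has zero initial data), whereas the paper handles them for a general solution with the Landau-type inequality of Lemma~\ref{lemma:dev-est}; both yield the same uniform analytic radius $r_0\ge\tfrac{1}{e}\sqrt{2/D}$, and your explicit $T^\ast$-continuation step spells out what the paper's proof of Theorem~\ref{thm:main1} asserts in one line.
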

\begin{remark}
The uniqueness class $\mathcal{M}_T$ is sharp in the sense that the exponent $2-\alpha$ in \eqref{eq:ass1} cannot be improved; see Theorem~\ref{sharp-example}.
\end{remark}

To prove the result, we first prove the time analyticity of the solutions to the wave equation on graphs. Then the theorem follows from the unique continuation property of analytic functions. 
We recall the results about the analyticity of the solutions of the wave equation on Riemannian manifolds. By the Cauchy-Kowalewski theorem \cite{kovalevskaja1874theorie}, suppose that the manifold $M$ is analytic and the data $f,g,h$ are analytic, then the solution of the wave equation \eqref{eq:wave1} is analytic in time, also in space. Recently, under some optimal growth condition, Dong and Zhang \cite{DongZhang19} proved the time analyticity of ancient solutions to the heat equation; see also Zhang \cite{zhang2020note}. By the discrete nature of graphs, one may guess that the solutions to the wave equation on graphs are always time-analytic. This is true for the solutions with Dirichlet boundary condition on a finite subset $\Omega\subset V$; see Proposition \ref{prop:finite}. However, in general this fails for infinite graphs. The same example as in Theorem~\ref{thm:exam} provides a counterexample for the time analyticity of the solutions to the wave equation; see Section~\ref{sec:sharp}. In the following, we prove the time analyticity of the solutions to the wave equation in the class $\mathcal{M}_T.$

\begin{theorem}\label{main-result}
Let $G=(V,E,\mu,\omega)$ be a weighted graph satisfying Assumption~\ref{def:ass}. Let $u$ be a solution of the homogeneous wave equation on $[0,T)\times V$ for some $T\in (0,\infty].$ Suppose that $u$ satisfies, for some $C>0$ and $A_1\in [0,2-\alpha],$
\begin{align}\label{func-growth-cond}
|u(t,x)|\leq C d(x,p)^{A_1 d(x,p)},\ \forall (t,x) \in [0,T) \times V,\ x\neq p.
\end{align}
Then $u=u(t,x)$ is analytic in $t\in [0,T)$ with analytic radius $r$ satisfying $r=+\infty$ (resp. $r\geq\frac{1}{e}\sqrt \frac{2}{D}$) if $A_1<2-\alpha$ (resp. $A_1=2-\alpha$).
\end{theorem}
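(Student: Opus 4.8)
The plan is to exploit the discreteness of the graph: because $(V,E)$ is locally finite, each application of $\Delta$ is a finite sum, so one may differentiate the wave equation freely in time and then control the spatial spreading of $\Delta^k$ combinatorially. First I would bootstrap the regularity in $t$. Given $u\in C^2_t$ with $\partial_t^2u=\Delta u$, the right-hand side $\Delta u(\cdot,x)=\sum_{y\sim x}\frac{\omega_{xy}}{\mu_x}\big(u(\cdot,y)-u(\cdot,x)\big)$ is a finite linear combination of the $C^2$ functions $u(\cdot,y)$, $y\in\overline{\{x\}}$, hence is itself $C^2$; iterating shows $u\in C^\infty_t$ and, for every $k$,
\[
\partial_t^{2k}u=\Delta^k u,\qquad \partial_t^{2k+1}u=\Delta^k(\partial_t u).
\]
Thus the analyticity question reduces to estimating the iterated Laplacians $\Delta^k u$ and $\Delta^k(\partial_t u)$ pointwise.

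The second step is a walk expansion for $\Delta^k$. Writing $\Delta f(x)=\sum_{y\sim x}\frac{\omega_{xy}}{\mu_x}f(y)-\Deg(x)f(x)$, the iterate $\Delta^kf(x_0)$ is a signed sum over length-$k$ walks $x_0,\dots,x_k$, where each step either stays (weight $-\Deg(x_i)$) or moves to a neighbor (weight $\frac{\omega_{x_ix_{i+1}}}{\mu_{x_i}}$). The sum of the absolute values of the one-step weights at a vertex $v$ is exactly $2\Deg(v)$, and such a walk stays in the ball $B(x_0,k)$; combining this with the degree hypothesis $\Deg\le D\,d(\cdot,p)^\alpha$ gives, with $m:=d(x_0,p)$,
\[
|\Delta^k f(x_0)|\le (2D)^k(m+k)^{\alpha k}\sup_{y\in B(x_0,k)}|f(y)|.
\]
Applying this to $f=u(t,\cdot)$ together with the growth bound \eqref{func-growth-cond} and the monotonicity of $s\mapsto s^{A_1 s}$ yields, for the even derivatives,
\[
|\partial_t^{2k}u(t,x_0)|\le C(2D)^k(m+k)^{\alpha k}(m+k)^{A_1(m+k)}.
\]

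The main obstacle is the odd derivatives $\partial_t^{2k+1}u=\Delta^k(\partial_t u)$, for which the same scheme requires a spatial growth bound on $\partial_t u$ that is \emph{not} assumed. I would produce one from $u$ itself by a Landau–Kolmogorov interpolation inequality on a compact subinterval $[0,T']\subset[0,T)$: there $\sup_t|\partial_t u(t,y)|$ is controlled by $\sup_t|u(t,y)|$ and $\sup_t|\Delta u(t,y)|$, and both carry the factor $d(y,p)^{A_1 d(y,p)}$ up to an extra polynomial factor in $d(y,p)$ (the degree bound contributing a power $d(\cdot,p)^\alpha$). Hence $\partial_t u$ obeys a bound of the same exponential type $d(\cdot,p)^{A_1 d(\cdot,p)}$, with constants depending on $T'$ but not on $k$, so that $|\partial_t^{2k+1}u(t,x_0)|$ admits an estimate of exactly the same form as the even case, with only harmless polynomial corrections. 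Since analyticity and its radius are local properties and these corrections do not affect the leading rate, it suffices to treat every $T'<T$.

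Finally I would feed these bounds into the Taylor series in $t$ and apply Stirling's formula. The competition is between the growth $(2D)^k(m+k)^{(\alpha+A_1)k}$ of the derivative bound (the term $(m+k)^{A_1 m}$ being subexponential) and $(2k)!\sim(2k/e)^{2k}$. Computing $\limsup_k\big(|\partial_t^{2k}u(t,x_0)|/(2k)!\big)^{1/(2k)}$, the exponent of $(m+k)$ tends to $(\alpha+A_1)/2$. If $A_1<2-\alpha$ this limit is strictly below $1$, the ratio tends to $0$, and the radius of convergence is $+\infty$. If $A_1=2-\alpha$ the exponent tends to $1$, so $(m+k)\sim k$ survives and the $\limsup$ equals $\tfrac{1}{2}e\sqrt{2D}$, giving radius $\ge \tfrac{2}{e\sqrt{2D}}=\tfrac1e\sqrt{\tfrac2D}$; the odd derivatives, by the bound above, yield the same value. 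The delicate point is precisely this last case, where the sharp constant $\tfrac1e\sqrt{2/D}$ emerges only by tracking the factors $2D$, $(m+k)^{\alpha k}$, and $(2k)!$ to leading order and checking that all polynomial-in-$k$ and $T'$-dependent factors are absorbed into the (irrelevant) constant $M$ in the Cauchy estimate $|\partial_t^n u(t,x_0)|\le M\,n!\,r^{-n}$.
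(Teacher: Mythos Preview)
Your proposal is correct and follows essentially the same line as the paper: bootstrap to $u\in C^\infty_t$, convert $\partial_t^{2k}u$ to $\Delta^k u$, bound $\Delta^k$ via the walk/ball estimate (this is exactly the paper's Lemma~\ref{lemma:lap-est}) combined with the degree and growth hypotheses, and compare against $(2k)!$ by Stirling to read off the radius.

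The only tactical difference concerns the odd time-derivatives. You first use a Landau--Kolmogorov interpolation between $u$ and $\partial_t^2u=\Delta u$ to give $\partial_t u$ the same spatial growth type (up to a polynomial factor), and then apply $\Delta^k$ to $\partial_t u$. The paper instead applies the Landau inequality (its Lemma~\ref{lemma:dev-est}) at the top level, sandwiching $\partial_t^{2k-1}u$ between $\Delta^{k-1}u$ and $\Delta^k u$ directly. Both routes produce the same leading term $\exp\!\big[(A_1+\alpha)k\ln k + k\ln(2D)+o(k)\big]$ and hence the same radius $\tfrac{1}{e}\sqrt{2/D}$ in the critical case; your variant has the advantage of reducing the odd case to the even one once and for all, while the paper's avoids carrying the extra polynomial-in-$k$ correction through the computation.
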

\begin{remark} \begin{enumerate}[(i)]\item An analogous result for the heat equation on graphs was proved in \cite{han2019time}. Note that the above result holds for the half interval $[0,T),$ which is stronger than that for $(-T,T)$. For the heat equation on manifolds, these are different: the time analyticity fails for $[0,T)$ due to the counterexample of Kovalevskaya, but it is true for $(-T,0]$ by \cite{DongZhang19}.
\item For the conclusion, the condition that $A_1\leq 2-\alpha$ is sharp; see Theorem~\ref{sharp-example}.
\end{enumerate}
\end{remark}
To sketch the proof strategy, for simplicity, we consider the case that the graph $G$ has bounded degree, and let $u$ be the solution to the homogeneous wave equation as in Theorem~\ref{main-result}. Note that the Laplace operator $\Delta$ is a bounded operator on bounded functions, and in fact the supremum norm of $\Delta u$ on $B_R(p),$ the ball of radius $R$ centered at $p$, is bounded above by that of $u$ on $B_{R+1}(p)$; see Lemma~\ref{lemma:lap-est}. By the induction, we estimate $\Delta^k u$ on $B_R(p)$ for any $k\geq 1$ by the supremum norm of $u$ on $B_{R+k}(p),$ which is bounded above by the growth condition \eqref{func-growth-cond}. Using the  homogeneous wave equation, we get the bound for $\partial_t^{2k} u$ for any $k\geq1,$ which yields the time analyticity of $u$ via the remainder estimate for the Taylor series in time of $u.$ 

Our approach applies for a class of linear evolution equations of the following form:
$$\partial_t^m u(t,x)-Lu(t,x)=f(t,x), \qquad (t,x)\in [-T,T]\times V,$$
where $m$ is a positive integer, $V$ is a discrete set and $L$ is in a class of linear operators on $\R^V$, including the Laplacian $\Delta$, the Schr\"odinger operator $\Delta + W(x)$, and the biharmonic operator $\Delta^2$ on weighted graphs with even complex-valued weights; see Section \ref{sec:derive}.

The paper is organized as follows: In next section, we recall some basic facts on weighted graphs. In Section \ref{sec:sharp}, we construct the counterexample for both the uniqueness and the time analyticity for the solutions of the wave equation. Section \ref{sec:proof} is devoted to the proofs of main results, Theorem~\ref{thm:main1} and Theorem~\ref{main-result}. In Section \ref{sec:derive}, we extend the result to a class of linear evolution equations on discrete sets.

\section{Preliminaries}
We recall some facts on weighted graphs. Let $G=(V, E, \mu,\omega)$ be a weighted graph. We denote by $$B_R(x):=\{y\in V: d(x,y)\leq R\}$$ the ball of radius $R$ centered at $x$. For any subset $K$ in $V$, we write $$B_R(K):=\{y\in V: \exists x\in K,\ \mathrm{s.t.}\ d(x,y)\leq R\}.$$

The following lemma states that the bound of $\Delta^k f(x)$ is controlled by the bound of the function $f(x)$ on a graph. This is an useful estimate in proving the main result.
  
\begin{lemma}\label{lemma:lap-est}
Let $G=(V, E, \mu,\omega)$ be a weighted graph and K be a subset of $V$. Then
\begin{enumerate}[(i)]
\item\label{prop1} $|\Delta f(x)|\leq 2\Deg(x) \sup\limits_{y\in B_1(x)}|f(y)|,$
\item\label{prop2} $\sup\limits_{x\in K}|\Delta f(x)|\leq 2\sup\limits_{y\in K}\Deg(y)\sup\limits_{z\in B_1(K)}|f(z)|,$
\item\label{prop3} $|\Delta^k f(x)|\leq \left( 2\sup\limits_{y \in B_k (x)}\Deg (y)\right)^k\sup\limits_{z\in B_k (x)}|f(z)|,\ \mathrm{for}\ k \in \N.$
\end{enumerate}
\end{lemma}

\begin{proof}
The results (\ref{prop1}) and (\ref{prop2}) are obtained directly from the definition of Laplacian on graphs. For (\ref{prop3}), by using (\ref{prop1}) and (\ref{prop2}), we have
\begin{align*}
|\Delta^k f(x)|
\leq&2\Deg(x) \sup\limits_{y\in B_1(x)}|\Delta^{k-1} f(x)|
\leq \cdots \ \cdots \\
\leq&\left(2\sup\limits_{y \in B_k (x)}\Deg (y)\right)^k\sup\limits_{z\in B_k (x)}|f(z)|.
\end{align*}

\end{proof}

In the following, we show the time analyticity for the solutions of the wave equation on finite subsets with Dirichlet boundary condition.
\begin{prop}\label{prop:finite} Let $\Omega$ be a finite subset of $V,$ and $u$ be a solution to the homogeneous wave equation on $\R\times \Omega$, 
\begin{align}\label{eq:wavegraphhomo}
\left\{\begin{array}{lr}
\partial_{t}^{2} u(t, x)-\Delta u(t, x)=0, & (t, x) \in \R \times \Omega, \\
u(0, x)=g(x), & x \in \Omega, \\
\partial_{t} u(0, x)=h(x), & x \in \Omega,\\
u(t, x)=0, & (t, x) \in \R \times \delta \Omega.
\end{array}\right. 
\end{align}
Then
\begin{equation}\label{eq:d1}u(t,x)=\sum_{i=1}^N \cos(t\sqrt{\lambda_i})a_i\psi_i(x)+\sum_{i=1}^N \frac{1}{\sqrt{\lambda_i}}\sin(t\sqrt{\lambda_i})b_i\psi_i(x),\end{equation} 
where $ N = \sharp \Omega$, $0<\lambda_1 \leq \lambda_2 \leq \cdots \leq \lambda_N$ are the eigenvalues of the Laplace operator with Dirichlet boundary condition on $\Omega$, and $\psi_i$ are the 
corresponding orthonormal eigenvectors. Here $a_i$ and $b_i$ satisfy 
\[
 \sum_{i=1}^N a_i\psi_i(x)=g(x),\ \sum_{i=1}^N b_i\psi_i(x)=h(x).
\]
In particular, $u$ is analytic in time.
\end{prop}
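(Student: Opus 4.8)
The plan is to exploit that $\Omega$ is finite, so the whole problem collapses to a constant-coefficient linear ODE on a finite-dimensional space, which I can diagonalize. First I would regard a function on $\overline{\Omega}$ vanishing on $\delta\Omega$ as an element of the $N$-dimensional space $\ell^2(\Omega,\mu)$, and define the Dirichlet Laplacian $\Delta^D$ on this space by $\Delta^D \phi(x):=\Delta\tilde\phi(x)$ for $x\in\Omega$, where $\tilde\phi$ is the extension of $\phi$ by zero on $\delta\Omega$. Writing $U(t):=(u(t,x))_{x\in\Omega}$, the boundary condition $u(t,\cdot)|_{\delta\Omega}=0$ together with the equation turns \eqref{eq:wavegraphhomo} into the autonomous system $U''(t)=\Delta^D U(t)$ with $U(0)=g$ and $U'(0)=h$.

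Next I would diagonalize $-\Delta^D$. A direct computation using $\omega_{xy}=\omega_{yx}$ gives the Green-type identity $\langle -\Delta^D\phi,\phi\rangle_\mu=\tfrac12\sum_{x,y}\omega_{xy}(\tilde\phi(x)-\tilde\phi(y))^2\geq 0$ for the weighted inner product $\langle f,g\rangle_\mu=\sum_{x}\mu_x f(x)g(x)$, so $-\Delta^D$ is symmetric and positive semidefinite. The point that needs real care --- and is the main obstacle --- is strict positivity, i.e. $\lambda_1>0$, since the representation \eqref{eq:d1} divides by $\sqrt{\lambda_i}$. I would establish it by a maximum principle: if $\Delta^D\phi=0$ then $\tilde\phi$ is harmonic on $\Omega$ and vanishes on $\delta\Omega$; because $G$ is infinite and connected while $\Omega$ is finite, every connected component of $\Omega$ has a neighbor in $\delta\Omega$, so a harmonic function attaining its extremum in the interior is forced to be constant on that component and hence to vanish. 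Thus $-\Delta^D$ is injective, and the spectral theorem yields eigenvalues $0<\lambda_1\leq\cdots\leq\lambda_N$ with a $\mu$-orthonormal eigenbasis $\psi_1,\dots,\psi_N$.

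Finally I would solve the diagonalized system. Expanding $U(t)=\sum_{i=1}^N c_i(t)\psi_i$ and using $-\Delta^D\psi_i=\lambda_i\psi_i$ decouples everything into the scalar equations $c_i''(t)=-\lambda_i c_i(t)$, whose solutions are $c_i(t)=a_i\cos(t\sqrt{\lambda_i})+\tfrac{b_i}{\sqrt{\lambda_i}}\sin(t\sqrt{\lambda_i})$ with $a_i=c_i(0)$ and $b_i=c_i'(0)$; matching the initial data forces $\sum_i a_i\psi_i=g$ and $\sum_i b_i\psi_i=h$, which is precisely \eqref{eq:d1}. Since each $t\mapsto\cos(t\sqrt{\lambda_i})$ and $t\mapsto\sin(t\sqrt{\lambda_i})$ extends to an entire function and the sum is finite, $u(t,x)$ is analytic in $t$ on all of $\R$, giving the last claim. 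I expect the only genuine difficulty to be the strict positivity of the Dirichlet spectrum; the ODE step and the analyticity are then routine.
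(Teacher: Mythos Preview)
Your argument is correct. The paper's proof is noticeably shorter and structured differently: rather than deriving the formula, the authors simply verify by inspection that \eqref{eq:d1} satisfies the system and then invoke the uniqueness theorem of Lin and Xie \cite{lin2019the} to conclude that any solution must coincide with it; analyticity is then read off from the explicit finite sum of sines and cosines, just as you do. Your route is more self-contained --- you never need the external uniqueness citation, since uniqueness for the finite-dimensional linear ODE system $U''=\Delta^D U$ is automatic --- and along the way you supply the maximum-principle justification for $\lambda_1>0$, which the paper simply asserts. The paper's approach is quicker if one is willing to cite \cite{lin2019the}; yours makes the proposition independent of that reference.
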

\begin{proof} One can show that $u(t,x)$ given in \eqref{eq:d1} is a solution to the homogeneous wave equation on $\R\times \Omega$. Moreover, by the uniqueness result of \cite{lin2019the}, this is the solution. The solution is analytic in time by the expression of $u$ in (\ref{eq:d1}).
\end{proof}

\section{A non-uniqueness result}\label{sec:sharp}

In this section, we construct a nontrivial solution of the wave equation with zero initial data on the graph. Let $\mathbb Z = (V,E,\mu,\omega)$ be the infinite line graph with unit weights. Here $V = \mathbb Z$ is the set of integers. For all $x,y \in V$, $\{x,y\} \in E \Leftrightarrow |x-y|=1$. The weight functions are $\mu_x=1,\ \forall x \in V$ and $\omega_{xy}=1,\ \forall \{x,y\} \in E$. 

\begin{figure}[H]
\centering
\includegraphics[width=0.75\textwidth]{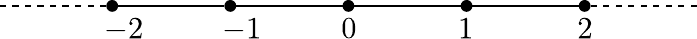}
\caption{Graph $\Z$, $\Deg(x)\equiv 2.$}
\label{fig.z}
\end{figure}

Following Tychonoff \cite{tychonoff1935}, Huang \cite[Section 3]{huang2012uniqueness} constructed a nontrivial solution of the heat equation with zero initial data on the infinite line graph $\mathbb Z$. By modifying his construction, we obtain a solution of the wave equation with zero initial data on $\mathbb Z$.

\begin{theorem}\label{sharp-example}
Let $\mathbb Z$ be the infinite line graph with unit weights. For any $\epsilon>0$, there exists a solution $u(t,x)$ to the homogeneous wave equation on $\R\times \Z$, which is not time-analytic, such that
\begin{equation}\label{eq:sharp}
\lim_{x \to \infty}|u(t,x)|e^{-(2+\epsilon)|x|\ln |x|}=0.
\end{equation}
 Moreover, $$u(0,x)=\partial_t u(0,x)=0, \ \forall x\in \Z.$$
\end{theorem}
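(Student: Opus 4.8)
The plan is to build the solution by treating the discrete wave equation as a two-step recursion in the space variable, seeded by a single non-analytic function of time. On $\Z$ the equation reads $\partial_t^2 u(t,x)=u(t,x+1)+u(t,x-1)-2u(t,x)$, which I rewrite as
\[ u(t,x+1)=\partial_t^2 u(t,x)+2u(t,x)-u(t,x-1). \]
Any choice of the two ``seed'' profiles $u(t,0)$ and $u(t,1)$ then determines $u(t,x)$ for all $x\in\Z$ (running upward for $x\ge 1$ and downward for $x\le 0$), and the resulting $u$ satisfies the homogeneous wave equation at \emph{every} vertex automatically, because each recursion step is exactly the equation at one vertex and the steps at $x=1,2,\dots$ together with those at $x=0,-1,-2,\dots$ exhaust $\Z$. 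I would take $u(t,0)\equiv 0$ and $u(t,1)=\phi(t)$ for a suitable $\phi$; the recursion then forces the odd symmetry $u(t,-x)=-u(t,x)$, and an easy induction shows that for $x\ge 1$
\[ u(t,x)=\sum_{m=0}^{x-1}c_{x,m}\,\phi^{(2m)}(t), \]
where the integer coefficients obey $c_{x+1,m}=c_{x,m-1}+2c_{x,m}-c_{x-1,m}$ with $c_{0,\cdot}=0$ and $c_{1,0}=1$.

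For the seed I would use a Tychonoff-type function, following Huang \cite{huang2012uniqueness}: a $C^\infty$ function $\phi$ that is non-analytic at $t=0$, vanishes together with all its derivatives there ($\phi^{(j)}(0)=0$ for all $j\ge 0$), and lies in a Gevrey class $G^s$, i.e. $|\phi^{(j)}(t)|\le BR^j (j!)^s$ uniformly for $t$ in compact sets, with $s>1$ to be fixed in terms of $\epsilon$. A concrete choice is $\phi(t)=e^{-t^{-\beta}}$ for $t>0$ and $\phi(t)=0$ for $t\le 0$, with $\beta=1/(s-1)$. Three of the required properties are then immediate. First, since $u(t,x)$ is a finite combination of derivatives of $\phi$, we get $u(0,x)=\sum_m c_{x,m}\phi^{(2m)}(0)=0$ and $\partial_t u(0,x)=\sum_m c_{x,m}\phi^{(2m+1)}(0)=0$, so the Cauchy data vanish. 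Second, $u\not\equiv 0$ because $u(t,1)=\phi(t)\not\equiv 0$. Third, $u(\cdot,1)=\phi$ is not real-analytic at $0$, so $u$ is not time-analytic.

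It remains to verify the growth bound \eqref{eq:sharp}, which is the crux of the argument and the step I expect to be most delicate. Taking absolute values in the coefficient recursion and setting $N_x:=\sum_m|c_{x,m}|$ gives $N_{x+1}\le 3N_x+N_{x-1}$, whence $N_x\le K^x$ for some constant $K$. Combined with the Gevrey bound and the fact that $u(t,x)$ involves derivatives of $\phi$ of order at most $2(x-1)$, this yields
\[ |u(t,x)|\le N_x\max_{0\le j\le 2x}|\phi^{(j)}(t)|\le B K^x R^{2x}\big((2x)!\big)^s\le B\,(KR^2)^x (2x)^{2sx}. \]
Consequently
\[ |u(t,x)|\,e^{-(2+\epsilon)|x|\ln|x|}\le B\,(KR^2 2^{2s})^{|x|}\,|x|^{(2s-2-\epsilon)|x|}. \]
Choosing $s\in\big(1,1+\tfrac{\epsilon}{2}\big)$ makes the exponent $2s-2-\epsilon$ strictly negative, so the super-exponential factor $|x|^{(2s-2-\epsilon)|x|}$ overwhelms the exponential prefactor and the right-hand side tends to $0$ as $x\to\infty$, giving \eqref{eq:sharp}.

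The main obstacle is precisely this balancing: one must exhibit a non-analytic seed whose derivative growth is slow enough that the order-$2|x|$ derivatives appearing at vertex $x$ stay below $|x|^{(2+\epsilon)|x|}$, yet fast enough that $\phi$ fails to be analytic. The Gevrey scale makes this quantitative — any $s>1$ forces non-analyticity, while $s<1+\epsilon/2$ delivers the bound — and it is exactly the room between $2s$ and $2+\epsilon$ that produces the gap matching the sharpness of the exponent $2-\alpha=2$ defining $\mathcal{M}_T$ on the line. The remaining verifications (the Cauchy estimates giving the Gevrey bound for $e^{-t^{-\beta}}$, and the bookkeeping for the coefficient recursion) are routine.
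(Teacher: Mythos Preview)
Your argument is correct and follows essentially the same Tychonoff-type construction as the paper: seed the solution with the Gevrey-class function $e^{-t^{-\beta}}$, expand $u(t,x)$ as a finite combination of its even-order time derivatives with combinatorial coefficients, and balance the Gevrey index (your $s=1+1/\beta$) against $2+\epsilon$ so that $2s<2+\epsilon$, i.e.\ $\beta>2/\epsilon$. The only cosmetic differences are that the paper places the symmetry at $x=-\tfrac12$ (taking $u(t,0)=u(t,-1)=g(t)$) and writes the coefficients in closed form as $(x+k)\cdots(x-k+1)/(2k)!$, whereas you place the symmetry at $x=0$ and bound $\sum_m|c_{x,m}|$ directly from the recursion; since only an exponential bound on the coefficients is needed against the super-exponential $((2x)!)^s$, both bookkeepings lead to the same estimate.
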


\begin{remark}
The result of Theorem \ref{thm:exam} is a part of the above theorem.
\end{remark}

\begin{proof}[Proof of Theorem \ref{sharp-example}]
Set
\begin{align*}
g(t)=
\begin{cases}
\exp(-t^{-\beta}),\ \ &t>0,\\
0, & t\leq 0,
\end{cases}
\end{align*}
where $\beta$ is a positive constant. Then the following properties of  $g(t)$ hold:

\begin{enumerate}[(i)]
\item\label{g-prop-1} $0 \leq g(t) \leq 1,\ t \in \R$,
\item\label{g-prop-2} $g^{(k)}(0)=0$ for all $k \in \N_0$,
\item\label{g-prop-3}$|g^{(k)}(t)|\leq k!\left(\frac{2k}{e\beta\theta^\beta}\right)^\frac{k}{\beta}$, where $\theta$ is a positive constant depends on $\beta$.
\end{enumerate}
Here (\ref{g-prop-1}) and (\ref{g-prop-2}) follow from direct calculation. The statement (\ref{g-prop-3}) was proved by Huang \cite[Section 3]{huang2012uniqueness} and Fritz John \cite{john1991partial}.

We define a function $u(t,x)$ on $\R \times \mathbb Z$ as follows,
\begin{align*}\begin{split}
u(t,x)=
\begin{cases}
g(t), \ \ \ \ \ \ \ \ \ \ \ \ \ \ \ \ & x=0,\\
g(t)+\sum_{k=1}^\infty\frac{g^{(2k)}(t)}{(2k)!}(x+k)\cdots (x+1)x\cdots (x-k+1),  & x\geq 1,\\
u(-x-1,t),  &x\leq -1.
\end{cases}
\end{split}\end{align*}
Note that for any fixed $x \in \mathbb Z$ and $k>x$, $$\frac{g^{(2k)}(t)}{(2k)!}(x+k)\cdots (x+1)x\cdots (x-k+1)=0.$$
So that $u(t,x)$ is well defined on $\R \times \mathbb Z$. Moreover, $\partial_t^k u(0,x)=0$ holds for all $k \in \N_0$. One readily verifies that $u(t,x)$ solves the wave equation and $u(t,0)$ is not analytic.

Next we prove (\ref{eq:sharp}). Note that $u(t,x)$ is symmetric with respect to $x=-\frac{1}{2}$. It suffices to prove the result for $x \ge 0$. 

By the properties of the function $g(t)$, we have 
\begin{equation}\nonumber\begin{aligned}
|u(t,x)|\leq& 1+\sum_{k=1}^\infty (2k)!\left(\frac{4k}{e\beta\theta^\beta}\right)^\frac{2k}{\beta}\frac{1}{(2k)!}(x+k)\cdots (x+1)x\cdots (x-k+1)\\
=&1+\sum_{k=1}^\infty(x+k)\cdots (x+1)x\cdots (x-k+1)\left(\frac{4k}{e\beta\theta^\beta}\right)^{\frac{2k}{\beta}}\\
=:&1+\sum_{k=1}^\infty a_k ,
\end{aligned}\end{equation}
where $a_k=(x+k)\cdots (x+1)x\cdots (x-k+1)\left(\frac{4k}{e\beta\theta^\beta}\right)^{\frac{2k}{\beta}}$. Since $a_{k}\leq a_x$ for $0 \leq k \leq x$ and $\lim\limits_{x \to \infty}a_x=+\infty$, there exists a positive constant $X_0$ such that for any $x>X_0$ the following holds:
\begin{align*}
|u(t,x)|
\leq& 1+\sum_{k=1}^\infty a_k
=1+\sum_{k=1}^x a_k\\
\leq& 1+xa_x
\leq 2xa_x\\
=& 2x(2x)!\left(\frac{4x}{e\beta\theta^\beta}\right)^{\frac{2x}{\beta}}\\
=& (2x)!\exp\left(\frac{2x}{\beta}\ln x+O(x)\right),
\end{align*}
where $O(x)$ denotes a function satisfying $\varlimsup\limits_{x \to +\infty }{\frac {|O(x)|}{x}}<+\infty.$ By Stirling's formula, $x! \sim \sqrt{2\pi x}\left(\frac{x}{e}\right)^x,$ $x \to +\infty.$ There exists a positive constant $X_1$ such that for any $x>X_1,$
\[
(2x)! \leq 4 \sqrt{\pi x}\left(\frac{2x}{e}\right)^{2x}=\exp[2x\ln x+O(x)].
\]
Hence
\begin{equation}\nonumber\begin{aligned}
|u(t,x)| \leq \exp\left[\left(2+\frac{2}{\beta}\right)x\ln x+O(x)\right].
\end{aligned}\end{equation}
For any $\epsilon>0$, let $v_\epsilon(x)=\exp[(2+\epsilon)x\ln x]$. We have
\begin{align*}
\frac{|u(t,x)|}{v_\epsilon(x)} \leq \exp\left[\left(\frac{2}{\beta}-\epsilon\right)x\ln x + O(x)\right].
\end{align*}
For any $\beta > \frac{2}{\epsilon},$ we have $$\lim_{x \to \infty}\frac{|u(t,x)|}{v_\epsilon(x)} = 0.$$
This proves the result.
\end{proof}

By the same argument, we can construct examples on $\mathbb Z$ for higher order equations of type $\partial_t^m u(t,x)=\Delta u, m \geq 3,$ as follows:
\begin{align}\label{sharp-example-higher-order}
\begin{split}
u(t,x)=
\begin{cases}
g(t), \ \ \ \ \ \ \ \ \ \ \ \ \ \ \ \ & x=0,\\
g(t)+\sum_{k=1}^\infty\frac{g^{(mk)}(t)}{(2k)!}(x+k)\cdots (x+1)x\cdots (x-k+1),  & x\geq 1,\\
u(-x-1,t),  &x\leq -1,
\end{cases}
\end{split}
\end{align}
where $g(t)$ is defined as in the proof of Theorem \ref{sharp-example}.\\

\section{Proof of The Main Results}\label{sec:proof}

In this section, we prove the main results. The following lemma is well known in the calculus.

\begin{lemma}\label{lemma:dev-est}
For any $f(x) \in C^2([a,b])$, let $M_k=\max\limits_{x\in [a,b]}\{|f^{(k)}(x)|\}$, $k=0,1,2$. Then $M_1\leq \frac{2}{b-a}M_0+(b-a)M_2$. 
\end{lemma}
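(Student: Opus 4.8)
The plan is to combine the mean value theorem with the fundamental theorem of calculus, so that the bound on $M_1$ is obtained by first controlling $f'$ at a single, well-chosen base point and then propagating that estimate across all of $[a,b]$ using $M_2$.

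First I would apply the mean value theorem to $f$ on $[a,b]$: there exists $c\in(a,b)$ such that
\[
f'(c)=\frac{f(b)-f(a)}{b-a}.
\]
Since $|f(a)|,|f(b)|\leq M_0$, this immediately gives the pointwise bound $|f'(c)|\leq \frac{2}{b-a}M_0$ at the base point $c$. The role of the mean value theorem is precisely to hand us a location where $f'$ is a priori small.

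Next, for an arbitrary $x\in[a,b]$ I would write, by the fundamental theorem of calculus applied to $f'\in C^1([a,b])$,
\[
f'(x)=f'(c)+\int_c^x f''(s)\,\mathrm{d}s.
\]
Estimating the integral by $|x-c|\,M_2\leq (b-a)M_2$ and inserting the bound on $|f'(c)|$ yields
\[
|f'(x)|\leq \frac{2}{b-a}M_0+(b-a)M_2
\]
for every $x\in[a,b]$. Taking the maximum over $x\in[a,b]$ gives $M_1\leq \frac{2}{b-a}M_0+(b-a)M_2$, as claimed.

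There is essentially no serious obstacle here, as this is a standard interpolation (Landau--Kolmogorov type) inequality; the only point requiring a little care is to locate a base point $c$ via the mean value theorem rather than attempting to bound $f'$ directly at the point where $|f'|$ is maximized, which would force a worse constant. An alternative route via a single Taylor expansion $f(y)=f(x)+f'(x)(y-x)+\frac{1}{2}f''(\eta)(y-x)^2$ and solving for $f'(x)$ also works, but one cannot take $|y-x|$ uniformly equal to $b-a$ for interior $x$, so the mean value theorem approach gives the stated constant most transparently.
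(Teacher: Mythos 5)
Your proof is correct, but it is not the argument the paper uses. You anchor the estimate at a single special point: the mean value theorem produces $c$ with $|f'(c)|\leq \frac{2}{b-a}M_0$, and then the fundamental theorem of calculus propagates the bound, $|f'(x)|\leq |f'(c)|+|x-c|\,M_2\leq \frac{2}{b-a}M_0+(b-a)M_2$. The paper instead works pointwise at an arbitrary $x_0$: it Taylor-expands $f(x_0+h)=f(x_0)+hf'(x_0)+\tfrac{1}{2}f''(x_1)h^2$ and observes that the linear Taylor polynomial $g(h)=f(x_0)+f'(x_0)h$ is therefore bounded by $M_0+\tfrac{1}{2}M_2(b-a)^2$ on the \emph{full} interval $h\in[a-x_0,b-x_0]$, whose length is exactly $b-a$ no matter where $x_0$ sits; since a linear function bounded by $K$ on an interval of length $L$ has slope at most $2K/L$, this yields $|f'(x_0)|\leq \frac{2}{b-a}M_0+(b-a)M_2$ directly. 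It is worth noting that the ``alternative route'' you dismiss at the end --- a single Taylor expansion solved for $f'(x)$ --- is essentially the paper's route, and the obstacle you correctly identify (that a single increment $|y-x|$ cannot be taken equal to $b-a$ for interior $x$) is precisely what the paper's slope-of-a-bounded-linear-function trick circumvents: it exploits boundedness of $g$ over the whole range of $h$ rather than at one value of $h$. Both proofs are elementary and give the same constant; yours is the more classical Landau--Kolmogorov-style argument, while the paper's avoids privileging a base point and, as a by-product, generalizes naturally to the higher-order interpolation inequalities (Ore's theorem) that the paper quotes immediately afterwards.
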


\begin{proof}[Proof]
By Taylor's formula, for any $x_0 \in [a,b]$ and $h \in [a-x_0,b-x_0]$, there exists $x_1 \in [a,b]$ such that

\begin{align*}
f(x_0+h)=f(x_0)+hf'(x_0)+\frac{{f}''(x_1)}{2}h^2.
\end{align*}
Hence we have
\begin{align*}
|f'(x_0)h+f(x_0)|
=&|f(x_0+h)-\frac{{f}''(x_1)}{2}h^2|\\
\leq & M_0+\frac{M_2(b-a)^2}{2}. 
\end{align*}
Set $g(h):=f'(x_0)h+f(x_0)$. Note that $g(h)$ is a linear function defined on $[a-x_0,b-x_0]$, and is bounded by $ M_0+\frac{M_2(b-a)^2}{2}$. So that the slope of $g(h)$ satisfies
\begin{align*}
|f'(x_0)|
\leq&2\left(M_0+\frac{M_2(b-a)^2}{2}\right)\frac{1}{(b-x_0)-(a-x_0)}\\
=&\frac{2}{b-a}M_0+(b-a)M_2.
\end{align*}
\end{proof}

This is a special case of Ore's inequalities on functions with bounded derivatives \cite{ore1938functions}.

\begin{theorem}[\cite{ore1938functions}]\label{ore-dev-est}
Let $f(x)\in C^{n+1}([a,b])$ satisfies
\begin{align*}
|f(x)|\leq M_0,\ \ \ |f^{(n+1)}(x)|\leq M_{n+1},\ \ \forall x \in [a,b].
\end{align*}
Then all intermediate derivatives of $f(x)$ are bounded above by
\begin{align*}
|f^{(i)}(x)|\leq\frac{K(i,n)}{(b-a)^i}\left(M_0+\frac{(b-a)^{n+1}}{(n+1)!}M_{n+1}\right),
\end{align*}
where $i=1,2,\cdots n$ and $K(i,n)=\frac{2^i n^2(n^2-1)\cdots(n^2-(i-1)^2)}{1\cdot 3\cdot 5\cdots (2i-1)}$.
\end{theorem}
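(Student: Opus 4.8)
The plan is to recognize this as the classical inequality of Ore and to reduce it to the Markov brothers' inequality for Chebyshev polynomials, of which it is the natural refinement allowing $f^{(n+1)}\not\equiv 0$. Observe that the special case $n=i=1$ is exactly Lemma~\ref{lemma:dev-est}, and the shape of the bound already signals that the sharp constant must be governed by the extremal role of the Chebyshev polynomial $T_n$.

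First I would normalize the interval to $[-1,1]$ by the affine substitution $x=\frac{a+b}{2}+\frac{b-a}{2}\,y$ with $y\in[-1,1]$. Under this change the $i$-th derivative is multiplied by $\left(\frac{2}{b-a}\right)^{i}$ and the $(n+1)$-th by $\left(\frac{2}{b-a}\right)^{n+1}$, so the prefactor $(b-a)^{-i}$ and the weight $\frac{(b-a)^{n+1}}{(n+1)!}$ on $M_{n+1}$ appear automatically. Moreover $K(i,n)$ decouples as $K(i,n)=2^{i}\,T_n^{(i)}(1)$, where $T_n^{(i)}(1)=\prod_{j=0}^{i-1}\frac{n^{2}-j^{2}}{2j+1}$ is exactly the $i$-th derivative of $T_n$ at the endpoint. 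Hence it suffices to establish, for $g\in C^{n+1}([-1,1])$ with $|g|\le M_0$ and $|g^{(n+1)}|\le M_{n+1}$, the bound $\|g^{(i)}\|_{\infty}\le T_n^{(i)}(1)\bigl(M_0+\tfrac{2^{n+1}}{(n+1)!}M_{n+1}\bigr)$, which I checked rescales back to the stated inequality.

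On $[-1,1]$ the polynomial case $M_{n+1}=0$ is precisely the V.A. Markov inequality $\|P^{(i)}\|_{\infty}\le T_n^{(i)}(1)\|P\|_{\infty}$ for $\deg P\le n$, whose extremizer is $T_n$; this already explains both the form of $K(i,n)$ and the coefficient of $M_0$. For general $g$ I would split off a polynomial by interpolating $g$ at the $n+1$ Chebyshev nodes $\eta_k=\cos\frac{(2k+1)\pi}{2(n+1)}$, whose nodal polynomial is $\omega=2^{-n}T_{n+1}$, so that the interpolation error obeys $\|g-P\|_{\infty}\le\frac{M_{n+1}}{(n+1)!}\,2^{-n}$. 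Applying Markov's inequality to $P$ controls $\|P^{(i)}\|_{\infty}$ by $T_n^{(i)}(1)\bigl(M_0+\|g-P\|_{\infty}\bigr)$, while the principal $M_{n+1}$ contribution comes from the derivative of the remainder $\|(g-P)^{(i)}\|_{\infty}$, which I would estimate in terms of $M_{n+1}$ through the Peano-kernel representation of the interpolation error (equivalently, through differentiating $\omega$ and the Hermite--Genocchi form of the divided difference). Collecting the two contributions then reproduces the assertion.

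The hard part will be the sharp constant: the entire strength of the statement rests on the higher-order Markov inequality $\|P^{(i)}\|_{\infty}\le T_n^{(i)}(1)\|P\|_{\infty}$, which is the delicate classical ingredient, and on tracking the $2^{n+1}/(n+1)!$ factors so that the remainder term combines with the Markov term under the single prefactor $T_n^{(i)}(1)$. If one only needs a finite, non-sharp bound on the intermediate derivatives --- which is all that the analyticity argument later in this paper actually requires --- the task is much easier: iterating Lemma~\ref{lemma:dev-est} together with the interpolation error estimate already yields such a bound, and I would invoke \cite{ore1938functions} only to obtain the optimal value of $K(i,n)$.
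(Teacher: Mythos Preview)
The paper does not prove this theorem: it is quoted from \cite{ore1938functions} and stated without proof, and in fact it is invoked only in the closing remark that the method extends to the higher-order equations $\partial_t^{m}u=\Delta u$ with $m\ge 3$. The proof of Theorem~\ref{main-result} itself uses only the elementary case $n=i=1$, namely Lemma~\ref{lemma:dev-est}. So there is no ``paper's own proof'' to compare your proposal against.

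On its merits, your framework is the correct one. The identification $K(i,n)=2^{i}\,T_n^{(i)}(1)$ is right, the affine rescaling to $[-1,1]$ is carried out properly, and the reduction to V.~A.~Markov's inequality in the polynomial case $M_{n+1}=0$ is exactly how the extremal constant arises. Your closing observation --- that for the analyticity argument in this paper any finite bound of the correct shape suffices, sharpness being a refinement --- is also accurate and worth stating.

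The one place where the sketch is genuinely incomplete is the remainder term. Writing $g=P+(g-P)$ with $P$ the Chebyshev interpolant and applying Markov to $P$ yields
\[
\|P^{(i)}\|_\infty\le T_n^{(i)}(1)\Bigl(M_0+\tfrac{2^{-n}}{(n+1)!}\,\widetilde M_{n+1}\Bigr),
\]
but you must then add $\|(g-P)^{(i)}\|_\infty$, and a Peano-kernel or Hermite--Genocchi estimate for this piece does not obviously collapse under the \emph{same} prefactor $T_n^{(i)}(1)$ with the stated coefficient $2^{n+1}/(n+1)!$. You flag this yourself, but it is not mere bookkeeping: this is precisely where the sharp inequality lives, and ``tracking the factors'' is not a proof. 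If you want the exact $K(i,n)$ you should either consult \cite{ore1938functions} directly or supply a complete argument for the remainder; if you only need a workable constant for the $\partial_t^{m}$ extension mentioned at the end of Section~\ref{sec:proof}, your interpolation sketch (or simply iterating Lemma~\ref{lemma:dev-est}) is already enough.
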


Now we are ready to prove Theorem \ref{main-result}.
\begin{proof}[Proof of Theorem \ref{main-result}]
Note that $u$ is a strong solution, i.e. $u\in C^2_t([0,T)\times V),$ to the homogeneous wave equation $\partial_t^2 u = \Delta u.$ By the same argument for the time regularity in \cite{hua2015time}, we can prove that $u\in C^\infty_t([0,T)\times V).$ Since $\Delta$ commutes with $\partial_t$, we have
\[
\partial_t^{2k} u = \Delta^k u,\ \forall k \in \N.
\]
Fix $x_0\in V,\ t_0 \in [0,T]$, let $R_n=\frac{\partial_t^{n+1}u(s,x_0)}{(n+1)!}(t-t_0)^{n+1}$ be the $n$-th Lagrange remainder of $u(t,x_0)$ at $t_0$, where $s \in (t_0,t)$ or $(t,t_0)$. It suffices to prove that $$\lim_{n \to \infty} R_n =0.$$

Case I: If n is odd, i.e. $n=2k-1$, $k\in \N$, then
\begin{align*}
R_n=R_{2k-1}=\frac{\partial_t^{2k}u(s,x_0)}{(2k)!}(t-t_0)^{2k}=\frac{\Delta^k u(s,x_0)}{(2k)!}(t-t_0)^{2k}.
\end{align*}

Set $d:=d(x_0,p)$. This yields that $B_k(x_0)\subset B_{k+d}(p)$. By Lemma \ref{lemma:lap-est}, we have
\begin{align*}
|\Delta^k u(s,x_0)|
&\leq 2^k\left(\sup\limits_{y\in B_k(x_0)}\Deg(y)\right)^k\sup\limits_{z\in B_k(x_0)}|u(z)|\\
&\leq 2^k\left(\sup\limits_{y\in B_{k+d}(p)}\Deg(y)\right)^k\sup\limits_{z\in B_{k+d}(p)}|u(z)|.\\
\end{align*}
Combining this with (\ref{func-growth-cond}) and Assumption \ref{def:ass}, we have
\begin{equation}\label{k-th-lap-est}\begin{aligned}
|\Delta^k u(s,x_0)|
&\leq 2^k D^k \exp[\alpha k \ln(k+d)]\cdot C \exp[A_1(k+d)\ln (k+d)]\\
&=\exp[(A_1+\alpha)k\ln(k+d)+k\ln(2D)+A_1d\ln(k+d)+\ln C]\\
&=\exp[(A_1+\alpha)k\ln k+k\ln(2D)+o(k)],\\
\end{aligned}\end{equation}
where $o(k)$ denotes a function satisfying $\lim\limits_{k \to +\infty}{\frac{o(k)}{k}}=0$.

By Stirling's formula, there is a positive constant $K_1$ such that for any $k>K_1,$
\be\label{eq:stir-app}
(2k)!\geq \left(\frac{2k}{e}\right)^{2k}=\exp\left(2k\ln k+2k\ln\frac{2}{e}\right).
\qe

Let $R$ be a positive constant to be determined later, which serves as the analytic radius of $u(t,x_0)$ at $t_0$. Suppose that $t\in (t_0-R,t_0+R)\cap [0,T]$, then we have
\be\label{eq:anal-rad-cond}
|t-t_0|^{2k}<R^{2k}=\exp(2k\ln R).
\qe

By (\ref{k-th-lap-est}), (\ref{eq:stir-app}), (\ref{eq:anal-rad-cond}) and setting $\varepsilon=2-\alpha-A_1 \ge 0$, we have
\begin{equation}\nonumber\begin{aligned}
|R_{2k-1}|
=&\frac{|\Delta^k u(s,x_0)|}{(2k)!}|t-t_0|^{2k}\\
\leq& \exp[(2-\varepsilon)k\ln k+k\ln(2D)+o(k)\\
&-(2k\ln k+2k\ln\frac{2}{e})+2k\ln R]\\
=&\exp\left[-\varepsilon k\ln k+2k\ln\left(e\sqrt{\frac{D}{2}}R\right)+o(k)\right]\\
=:&\exp[\varphi(k)],
\end{aligned}\end{equation}
where $\varphi(k)=-\varepsilon k\ln k+2k\ln\left(e\sqrt{\frac{D}{2}}R\right)+o(k)$.

If $\varepsilon=2-\alpha-A_1>0$, then for any $R>0$, we have
\[
\lim_{k \to \infty}\varphi(k)=-\infty
\]
and
\be\label{evencase1}
\lim_{k \to \infty}|R_{2k-1}| \leq \lim_{k \to \infty}\exp[\varphi(k)]=0.
\qe

If $\varepsilon=2-\alpha-A_1=0$, let $R<\frac{1}{e}\sqrt \frac{2}{D}$, then
\[
\lim_{k \to \infty}\varphi(k)=-\infty
\]
and
\be\label{evencase2}
\lim_{k \to \infty}|R_{2k-1}| \leq \lim_{k \to \infty}\exp[\varphi(k)]=0.
\qe

Case II: If $n$ is even, i.e. $n=2k-2$, $k\ge 2$, then
$$R_n=R_{2k-2}=\frac{\partial_t^{2k-1}u(s,x_0)}{(2k-1)!}(t-t_0)^{2k-1}.$$
By Lemma \ref{lemma:dev-est} and (\ref{k-th-lap-est}), we have
\begin{equation}\nonumber\begin{aligned}
|\partial_t^{2k-1}u(s,x_0)| \leq & \frac{2}{T} \max \limits_{s \in [0,T]} |\partial_t^{2k-2}u(s,x_0)|+T\max \limits_{s \in [0,T]} |\partial_t^{2k}u(s,x_0)|\\
=& \frac{2}{T} \max \limits_{s \in [0,T]} |\Delta^{k-1}u(s,x_0)|+T\max \limits_{s \in [0,T]} |\Delta^{k}u(s,x_0)|\\
\leq & \frac{2}{T}\exp[(A_1+\alpha)(k-1)\ln(k-1)+(k-1)\ln(2D)+o(k)]\\
&+T\exp[(A_1+\alpha)k\ln k+k\ln(2D)+o(k)]\\
=&\exp[(A_1+\alpha)k\ln k+ k\ln(2D)+o(k)].
\end{aligned}\end{equation}
Combining this with (\ref{eq:stir-app}) and (\ref{eq:anal-rad-cond}), we have 
\begin{equation}\nonumber\begin{aligned}
|R_{2k-2}|
=&\frac{|\partial_t^{2k-1}u(s,x_0)|}{(2k-1)!}|t-t_0|^{2k-1}\\
\leq&\frac{2k}{R} \frac{|\partial_t^{2k-1}u(s,x_0)|}{(2k)!} R^{2k}\\
\leq& \exp[(A_1+\alpha)k\ln k+k\ln(2D)+o(k)\\
&-(2k\ln k+2k\ln\frac{2}{e})+2k\ln R+\ln\frac{2k}{R}]\\
=&\exp\left[-\varepsilon k\ln k+2k\ln\left(e\sqrt{\frac{D}{2}}R\right)+o(k)\right].
\end{aligned}\end{equation}
By the same argument in Case I, we have the following:\\
If $\varepsilon=2-\alpha-A_1>0$, then for any $R>0$,
\be\label{oddcase1}
\lim_{k \to \infty}|R_{2k-1}|=0.
\qe
If $\varepsilon=2-\alpha-A_1=0$, let $R<\frac{1}{e}\sqrt \frac{2}{D}$, then
\be\label{oddcase2}
\lim_{k \to \infty}|R_{2k-1}| =0.
\qe

By (\ref{evencase1}) and (\ref{oddcase1}), we prove that $u$ is analytic with analytic radius $r=+\infty$ if $A_1<2-\alpha$. By (\ref{evencase2}) and (\ref{oddcase2}), we prove that $u$ is analytic with analytic radius $r\geq \frac{1}{e}\sqrt \frac{2}{D}$ if $A_1=2-\alpha$. This completes the proof of the theorem.

\end{proof}

Next we prove Theorem~\ref{thm:main1}.
\begin{proof}[Proof of Theorem~\ref{thm:main1}]
Set $w=u-v.$ Then $w$ is the solution of the homogeneous wave equation on $(-T,T)\times V.$ By Theorem~\ref{main-result}, $w(t,x)$ is analytic in $(-T,T)$ for any $x\in V.$ Note that $w(0,x)=\partial_tw(0,x)=0$ for all $x\in V.$ Since $\partial_t^{2k} u = \Delta^k u,$ $k\in \N,$ $$\partial_t^{2k}u(0,x)=0,\quad \forall x\in V.$$ Moreover,
$$\partial_t^{2k-1}u(0,x)=\Delta^{k-1} (\partial_tu)(0,x)=0,\quad \forall x\in V.$$ Since $w(t,x)$ is analytic, $w\equiv 0.$ This proves the theorem.
\end{proof}


\section{Linear evolution equations on a discrete set}\label{sec:derive}

In this section, we study the uniqueness class of solutions to a class of linear evolution equations on a discrete set.

\subsection{Linear operators on a discrete set and the induced weighted directed graph}
Firstly, we recall the setting of directed graphs. A \emph{directed graph} or \emph{digraph} is an ordered pair $G=(V,E)$, where $V$ is the vertex set and $E$ is the directed edge set, which is a set of ordered pair of elements in $V$. We allow self-loops in this section, i.e. $(x,x)\in E$ for some $x\in V$. Let 
$$\deg^+(x)=\# \{y\in V: (x,y) \in E\}$$
be the out-degree, and 
$$\deg^-(x)=\# \{y\in V: (y,x) \in E\}$$
be the in-degree of $x\in V$. We say $(V,E)$ is locally finite if $\deg^+(x)<\infty$ for all $x\in V$.
We write $x\leadsto y$ if $(x,y) \in E$ and let 
$$d(x,y):=\inf\{n:x=z_0\leadsto z_1 \leadsto \cdots \leadsto z_n=y\}$$ 
be the \emph{directed distance} from $x$ to $y$. Though $d$ is generally not a metric of $(V,E)$ as $d(x,y) \ne d(y,x)$ in general, we still have the triangle inequality, i.e. $d(x,y) \le d(x,z)+d(z,y)$. Let 
$$B_R(x):=\{y\in V: d(x,y) \le R\}$$
be the $R$-ball centered at $x$. We say a pair $(G,p)$ is a \emph{connected rooted digraph} with root $p\in V$ if $d(p,x)<\infty$ for all $x\in V$, i.e. there exists a directed path from $p$ to every vertex in $V$.

Given a directed graph $G=(V,E)$, let 
$$q: E \to \R,\quad (x,y)\mapsto q(x,y),$$ 
be the edge weight function. For convenience we extend the edge weight function to $V\times V\to \R$ with $q(x,y)=0$ if $(x,y)\notin E$. We call the triple $G=(V,E,q)$ a \emph{weighted directed graph} or \emph{weighted digraph}. The (weighted) out-Degree is defined via
$$\Deg^+(x)=\sum_{(x,y)\in E}|q(x,y)|,$$
and the (weighted) in-Degree is defined via
$$\Deg^-(x)=\sum_{(y,x)\in E}|q(y,x)|.$$

Next, we consider the linear operators on the function space on a discrete space. Let $V$ be a discrete set and $\R^V$ be the set of all real functions on $V$. Let
$$L:\R^V \to \R^V$$ 
be a linear operator.
We say $L$ is \emph{finitely-determined} if for any $x\in V$ there exists a finite subset $V_x\subset V$, such that $Lu(x)=0$ holds for all $u\in \R^V$ satisfying $u|_{V_x}=0$. Then for any $u\in \R^V$, we have
$$Lu(x)=\sum_{y\in V_x}L\mathbbm{1}_y(x)u(y),$$
where $\mathbbm{1}_y$ is the characteristic function of $y\in V$ defined via
$$
\mathbbm{1}_y(x)=
\begin{cases}
1,\quad x=y;\\
0, \quad x\ne y.
\end{cases}
$$
Given a finitely-determined linear operator $L$, the finite subset $V_x$ can be chosen in the following way: 
$$V_x=S_L^x:=\{y\in V: L\mathbbm{1}_y(x)\ne 0\}.$$
Let $\mathcal{L}(V)$ be the set of all finitely-determined linear operators on $\R^V$, which is the operator class we will study in this section. In the following, we will see that there is a natural connection between $\mathcal{L}(V)$ and all the locally finite weighted digraphs with vertex set $V$.

\begin{definition}[Digraph induced by a linear operator]
Given a discrete set $V$ and a linear operator $L\in \mathcal{L}(V)$, we say that $G(L)=(V,E,q)$ is the digraph induced by $L$ if 
$$q(x,y)=L\mathbbm{1}_y(x),$$
and $$E=\{(x,y):\ q(x,y) \ne 0\}.$$
\end{definition}

One easily verifies that the induced graph $G(L)$ is locally finite for all $L\in \mathcal{L}(V)$. Conversely, given a locally finite weighted digraph $G=(V,E,q)$, we can also define a linear operator $L\in \mathcal{L}(V)$ satisfying $S_L^x=\{y:(x,y) \in E\}$ via 
$$Lu(x)=\sum_{y\in V} q(x,y)u(y).$$ 

\begin{example}[The left shift operator on $\R^{\N}$] Since an infinite sequence can be regarded as a function on $\N$, we denote by $\R^{\N}$ the set of all infinite sequences. Let $L$ be the left shift operator, i.e. 
$$L:(a_1,a_2,a_3,\cdots)\mapsto (a_2,a_3,\cdots).$$
Then $L\in \mathcal{L}(\N)$, and $q(i,j)=1$ if $j=i+1$, otherwise $q(i,j)=0$. 

\begin{figure}[H]
\centering
\includegraphics[width=0.65\textwidth]{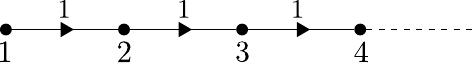}
\caption{The digraph induced by the left shift operator.}
\label{fig.halfline}
\end{figure}

\end{example}
As an analogy of Lemma \ref{lemma:lap-est}, we have the following proposition.
\begin{prop}\label{prop:norm-est} Let $L\in \mathcal{L}(V)$, $u\in \R^V$, and $G=(V,E,q)$ be the digraph induced by $L$, then for any positive integer $k$, we have
$$|L^k u(x)|\leq \left( \sup\limits_{y \in B_k (x)}\Deg^+ (y)\right)^k\sup\limits_{z\in B_k (x)}|u(z)|.$$
\end{prop}

\subsection{Uniqueness of solutions to linear evolution equations on a discrete set}
Consider the following evolution equation on a discrete set $V$:
\begin{align}\label{eq:evol}
\left\{\begin{array}{lr}
\partial_{t}^{m} u(t, x) - Lu(t, x)= f(t,x), & (t, x) \in(-T, T) \times V, \\
u(0, x)=g(x), & x \in V, \\
\partial_t u(0,x)=g_1(x), & x \in V,\\
\qquad \cdots & \cdots\\
\partial_t^{(m-1)}u(0, x)=g_{m-1}(x), & x \in V. \\
\end{array}\right.
\end{align}

By Proposition \ref{prop:norm-est} and Theorem \ref{ore-dev-est}, we have the following results.

\begin{theorem}\label{Thm:evolution-1}
Let $V$ be a discrete set, $L\in \mathcal{L}(V)$, $G=(V,E,q)$ be the digraph induced by $L$, and $C,D$ be positive constants. Suppose $(G,p)$ is a connected rooted digraph and the out-Degree satisfies 
\begin{equation*}
    \Deg^+(x)\le Dd(p,x)^{\alpha},\quad 0\le \alpha \le m, \quad x\ne p.
\end{equation*}
If $u$ is a solution of (\ref{eq:evol}) with $f(t,x)\equiv 0$ and satisfying
\begin{equation*}
    |u(t,x)|\le Cd(p,x)^{(m-\alpha)d(p,x)},\quad x\ne p,
\end{equation*}
then $u$ is time-analytic.
\end{theorem}

\begin{theorem}\label{Thm:evolution-2}
Let $V$ be a discrete set, $L\in \mathcal{L}(V)$, $G=(V,E,q)$ be the digraph induced by $L$, and $C_1, C_2, D$ be positive constants. Suppose $(G,p)$ is a connected rooted digraph and the out-Degree satisfies 
\begin{equation*}
    \Deg^+(x)\le Dd(p,x)^{\alpha},\quad 0\le \alpha \le m, \quad x\ne p.
\end{equation*}
If $u,v$ are solutions of (\ref{eq:evol}) and satisfying
\begin{equation*}
    |u(t,x)|\le C_1d(p,x)^{(m-\alpha)d(p,x)}, \quad x\ne p,
\end{equation*}
and 
\begin{equation*}
    |v(t,x)|\le C_2d(p,x)^{(m-\alpha)d(p,x)}, \quad x\ne p,
\end{equation*}
then $u\equiv v$.
\end{theorem}

The proofs are similar to those of Theorem \ref{main-result} and Theorem \ref{thm:main1}, so we omit them. By the above results, we can deal with the uniqueness of solutions to a wide class of equations. Here are some examples.

\begin{example}[Laplacian on weighted graphs]
Let $G=(V,E,\mu,\omega)$ be an undirected weighted graph, $L=\Delta \in \mathcal{L}(V)$ be the graph Laplacian on $G$. Let $G'=(V',E',q)$ be the weighted digraph induced by $\Delta$, then we have 
\begin{equation*}
    q(x,y)=
    \begin{cases}
    \frac{\omega(x,y)}{\mu(x)}, \qquad &x\ne y, \ x\sim y \text{ in } G;\\
    -\Deg_G(x), &x=y;\\
    0, &\text{otherwise};
    \end{cases}
\end{equation*}
and 
$$E'=\{(x,y):\{x,y\} \in E\} \cup \{(x,x):x \in V \}.$$
Moreover, we have $d_{G'}(x,y)=d_{G'}(y,x)=d_G(x,y)$ for all $x,y \in V$, $d_{G'}$ is a metric of $G'$, and $G'$ is isometric to $G$ as a metric space. Consider the out-Degree on $G'$, we have $\Deg^+_{G'}(x)=2\Deg_G(x)$. Then by Theorem \ref{Thm:evolution-1} and Theorem \ref{Thm:evolution-2}, we can handle the uniqueness class of solutions to equations of the following form:
$$\partial_{t}^{m} u(t, x) - \Delta u(t, x)= f(t,x).$$
For $m=2$, this is exactly what we did in Theorem \ref{thm:main1}, and for $m=1$, we refer to \cite{han2019time}. \\
This approach also applies to the Schr\"odinger operator $\Delta+W(x)$ on undirected or directed weighted graphs, where $W(x)\in \R^V$ is a potential function on $V$.
\end{example}

\begin{example}[The $n$-th power of $L\in \mathcal{L}(V)$]
Let $V$ be a discrete set and $L\in \mathcal{L}(V)$. Then $L^2=L\cdot L \in \mathcal{L}(V)$. Let $G=(V,E,q)$ be the digraph induced by $L$. We define a digraph $G'=(V,E',q')$ via
$$q'(x,y)=\sum_{z\in V}q(x,z)q(z,y),$$
and 
$$E'=\{(x,y): d(x,y)\le 2\}.$$
Note that $G'$ may be different from the digraph $G(L^2)$ induced by $L^2$. Consider $G'$, we have
$$d_{G'}(x,y)=\left \lceil \frac{d_G(x,y)}{2} \right \rceil,$$
and $$\Deg_{G'}^+(x)=\sum_{y}|q'(x,y)|\le \max_{y\in B_1(x) \text{ in } G}(\Deg_G^+(y))^2.$$
One also easily verifies that $L^n \in \mathcal{L}$ for any positive integer $n$, then by Theorem \ref{Thm:evolution-1} and Theorem \ref{Thm:evolution-2}, we can handle the uniqueness class of solutions to equations of the following form:
$$\partial_{t}^{m} u(t, x) - L^nu(t, x)= f(t,x). $$
Let $L=\Delta$ be the Laplacian on graphs and $n=2$, $L^2=\Delta^2$ is called the biharmonic operator.
\end{example}

The following figure shows the digraphs induced by $\Delta$ and $\Delta^2$ on the infinite line graph $\Z$, which is defined in Section \ref{sec:sharp}.
\begin{figure}[H]
\centering
\subfigure[Digraph induced by $\Delta$ on $\Z$, $\Deg^+(x) \equiv 4$.]{
\includegraphics[width=0.75\textwidth]{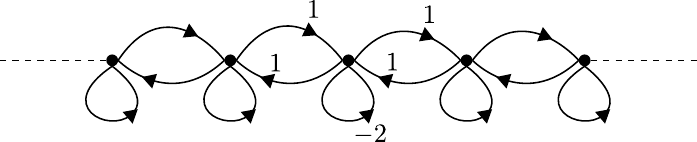}}
\qquad 
\subfigure[Digraph induced by $\Delta^2$ on $\Z$, $\Deg^+(x) \equiv 16$.]{
\includegraphics[width=0.75\textwidth]{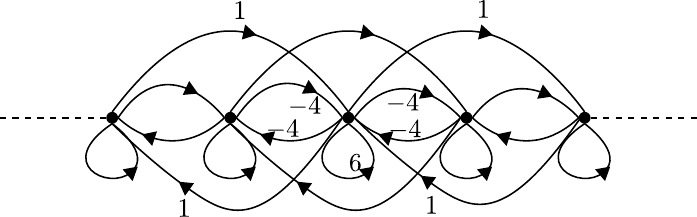}}

\caption{}
\label{fig.line}
\end{figure}

\subsection{For vector-valued function spaces} Given a set $V$, let 
$$\R^{NV}:=\{u|\  u:V \to \R^N\}$$
be the set of all $N$-dimensional vector-valued functions on $V$. Similarly with the previous discussion, we can also define finitely-determined linear operators on $\R^{NV}$, and we still use $\mathcal{L}(V)$ to represent the set of these operators. Given a linear operator $L\in \mathcal{L}(V)$, let
$$ S_L^x:=\{y\in V : \ L\mathbbm{1}^{e_i}_y(x) \ne \ 0 \text{ for some } i,\ 1\le i \le N\},$$
where $e_i=(0,\cdots, 1, \cdots, 0)^T$ is the $i$-th unit vector and
$$
\mathbbm{1}_y^{e_i}(x)=
\begin{cases}
e_i,\qquad & x=y;\\
0, & x\ne y.
\end{cases}
$$
Then we have
$$Lu(x)=\sum_{y\in S_L^x}Q(x,y)u(y) \in \R^N,$$
where $Q(x,y)$ is a $N\times N$ real matrix with $Q_{i,j}(x,y)=L\mathbbm{1}_y^{e_i}(x)\cdot e_j$.

Similarly, we can also define $G(L)=(V,E,Q)$ be the digraph induced by $L$, which is a digraph with matrix-valued edge weight $Q$. We define the (weighted) out-Degree via
$$\Deg^+(x):=\sum_{y\in S_L^x}\sum_{i,j}|Q_{i,j}(x,y)|.$$
Given a vector $\nu=(\nu_1,\nu_2,\cdots,\nu_N)$, we denote by $||\nu||_{\infty}=\max|\nu_i|$ the infinity norm of $\nu$. Then for all $u\in \R^{NV}$, we have
$$||Lu(x)||_{\infty}\le \Deg^+(x)\max_{y\in S_L^x}||u(y)||_{\infty},$$
and 
$$||L^k u(x)||_{\infty}\leq \left( \sup\limits_{y \in B_k (x)}\Deg^+ (y)\right)^k\sup\limits_{z\in B_k (x)}||u(z)||_{\infty}.$$

Results analogous to Theorem \ref{Thm:evolution-1} and Theorem \ref{Thm:evolution-2} can also be derived. The following are two examples of linear operators on the space of vector-valued functions on graphs.

\begin{example}[Laplacian on graphs with complex-valued weight]
Given a undirected graph $G=(V,E,\mu,\omega)$ with complex-valued weight, where $\omega:E\to \C$ is the edge weight function and $\mu:V\to \C \setminus\{0\}$ is the vertex weight function. Consider the Laplacian on the complex-valued function space $\C^V$
$$\Delta u(x):=\frac{1}{\mu(x)}\sum_{y,y\sim x}\omega(x,y)(u(y)-u(x)).$$
We write the multiplication of complex numbers
 $(a+ib)(c+id)=(ac-bd)+i(ad+bc)$ in the following way:
\begin{equation*}
\begin{pmatrix}
a & -b \\
b & a
\end{pmatrix}
\begin{pmatrix}
c \\
d
\end{pmatrix}
=\begin{pmatrix}
ac-bd \\
ad+bc
\end{pmatrix}.
\end{equation*}
Then $\Delta \in \mathcal{L}(V)$ and $d_{G'}(x,y)=d_G(x,y)$, where $G'=(V,E',Q)$ is the digraph induced by $\Delta$, and $G'$ is with edge weight
\begin{equation*}
Q(x,y)=
\begin{cases}
\frac{1}{\mu(x)}\omega(x,y), \qquad & \text{$x\sim y$ in $G$}; \\
-\Deg_G(x), & x=y;\\
0, &\text{otherwise.}
\end{cases}
\end{equation*}
\end{example}

\begin{example}[Connection Laplacian on graphs]
Given a weighted undirected graph $G=(V,E,\mu,\omega)$, we assign a signature $\sigma_{xy}$ to each oriented edge $(x,y)$ with $\{x,y\} \in E$, where each $\sigma_{xy}$ is a $N$-dimensional orthogonal matrix satisfying $\sigma_{xy}=\sigma_{yx}^{-1}$. Or we write the signature as a map $\sigma: E^{or} \to O(N)$, where $E^{or}=\{(x,y):\{x,y\} \in E\}$ is the set of oriented edges and $O(N)$ is the $N$-dimensional orthogonal group. For any vector-valued function $u:V \to \R^N$, the connection Laplacian is defined via 
$$\Delta^{\sigma}u(x):=\frac{1}{\mu(x)}\sum_{y,y\sim x} \omega_{xy}(\sigma_{xy}u(y)-u(x)) \in \R^N. $$
We refer to \cite{liu2019curvature} for more on connection Laplacian on graphs.\\
Let $G'=(V,E',Q)$ be the digraph induced by the connection Laplacian $\Delta^{\sigma}$. Then $d_{G'}(x,y)=d_{G}(x,y)$ and 
\begin{equation*}
Q(x,y)=
\begin{cases}
\frac{1}{\mu(x)}\omega(x,y)\sigma_{xy},\qquad & \text{$x\sim y$ in $G$}; \\
-\Deg_G(x)I_N, & x=y;\\
0, &\text{otherwise.}
\end{cases}
\end{equation*}

\end{example}

\section*{Acknowledgments}

B.H. is supported by NSFC, no.11831004 and no.11926313.

\bibliographystyle{alpha}
\bibliography{Bib-wave}

\begin{thebibliography}{HHW21}

\bibitem[Bar17]{barlow2017random}
Martin~T. Barlow.
\newblock {\em Random walks and heat kernels on graphs}, volume 438 of {\em
  London Mathematical Society Lecture Note Series}.
\newblock Cambridge University Press, Cambridge, 2017.

\bibitem[Car04]{carroll2004an}
Sean Carroll.
\newblock {\em Spacetime and geometry}.
\newblock Addison Wesley, San Francisco, CA, 2004.
\newblock An introduction to general relativity.

\bibitem[DZ20]{DongZhang19}
Hongjie Dong and Qi~S. Zhang.
\newblock Time analyticity for the heat equation and {N}avier-{S}tokes
  equations.
\newblock {\em J. Funct. Anal.}, 279(4):108563, 15, 2020.

\bibitem[Eva10]{evans2010partial}
Lawrence~C. Evans.
\newblock {\em Partial differential equations}, volume~19 of {\em Graduate
  Studies in Mathematics}.
\newblock American Mathematical Society, Providence, RI, second edition, 2010.

\bibitem[FT04]{friedman2004wave}
Joel Friedman and Jean-Pierre Tillich.
\newblock {Wave equations for graphs and the edge-based {L}aplacian}.
\newblock {\em Pacific J. Math.}, 216(2):229--266, 2004.

\bibitem[Gri86]{grigor'yan1986stochastically}
Alexander Grigor'yan.
\newblock Stochastically complete manifolds.
\newblock {\em Dokl. Akad. Nauk SSSR}, 290(3):534--537, 1986.

\bibitem[Gri99]{grigor'yan1999analytic}
Alexander Grigor'yan.
\newblock Analytic and geometric background of recurrence and non-explosion of
  the {B}rownian motion on {R}iemannian manifolds.
\newblock {\em Bull. Amer. Math. Soc. (N.S.)}, 36(2):135--249, 1999.

\bibitem[Gri18]{grigor'yan2018introduction}
Alexander Grigor'yan.
\newblock {\em Introduction to analysis on graphs}, volume~71 of {\em
  University Lecture Series}.
\newblock American Mathematical Society, Providence, RI, 2018.

\bibitem[HHW21]{han2019time}
Fengwen Han, Bobo Hua, and Lili Wang.
\newblock Time analyticity of solutions to the heat equation on graphs.
\newblock {\em Proc. Amer. Math. Soc.}, 149(6):2279--2290, 2021.

\bibitem[HKS20]{huang2018uniqueness}
Xueping Huang, Matthias Keller, and Marcel Schmidt.
\newblock On the uniqueness class, stochastic completeness and volume growth
  for graphs.
\newblock {\em Trans. Amer. Math. Soc.}, 373(12):8861--8884, 2020.

\bibitem[HM15]{hua2015time}
Bobo Hua and Delio Mugnolo.
\newblock Time regularity and long-time behavior of parabolic {$p$}-{L}aplace
  equations on infinite graphs.
\newblock {\em J. Differential Equations}, 259(11):6162--6190, 2015.

\bibitem[H{\"o}r97]{hormander1997lectures}
Lars H{\"o}rmander.
\newblock {\em Lectures on nonlinear hyperbolic differential equations},
  volume~26 of {\em Math\'{e}matiques \& Applications (Berlin) [Mathematics \&
  Applications]}.
\newblock Springer-Verlag, Berlin, 1997.

\bibitem[Hua12]{huang2012uniqueness}
Xueping Huang.
\newblock On uniqueness class for a heat equation on graphs.
\newblock {\em J. Math. Anal. Appl.}, 393(2):377--388, 2012.

\bibitem[Joh91]{john1991partial}
Fritz John.
\newblock {\em Partial differential equations}, volume~1 of {\em Applied
  Mathematical Sciences}.
\newblock Springer-Verlag, New York, fourth edition, 1991.

\bibitem[KL12]{keller2012dirichlet}
Matthias Keller and Daniel Lenz.
\newblock Dirichlet forms and stochastic completeness of graphs and subgraphs.
\newblock {\em J. Reine Angew. Math.}, 666:189--223, 2012.

\bibitem[Kov74]{kovalevskaja1874theorie}
Sof'ja~V Kovalevskaja.
\newblock {\em Zur theorie der partiellen differentialgleichungen}.
\newblock 1874.

\bibitem[LMP19]{liu2019curvature}
Shiping Liu, Florentin M\"{u}nch, and Norbert Peyerimhoff.
\newblock Curvature and higher order {B}user inequalities for the graph
  connection {L}aplacian.
\newblock {\em SIAM J. Discrete Math.}, 33(1):257--305, 2019.

\bibitem[LX19]{lin2019the}
Yong {Lin} and Yuanyuan {Xie}.
\newblock The existence and properties of the solution of the wave equation on
  graph.
\newblock {\em arXiv preprint arXiv:1908.02137}, 2019.

\bibitem[Ore38]{ore1938functions}
Oystein Ore.
\newblock On functions with bounded derivatives.
\newblock {\em Trans. Amer. Math. Soc.}, 43(2):321--326, 1938.

\bibitem[Tay11]{taylor2011partial}
Michael~E. Taylor.
\newblock {\em Partial differential equations {I}. {B}asic theory}, volume 115
  of {\em Applied Mathematical Sciences}.
\newblock Springer, New York, second edition, 2011.

\bibitem[Tyc35]{tychonoff1935}
A.~Tychonoff.
\newblock Th\'eor\`emes d'unicit\'e pour l'\'equation de la chaleur.
\newblock {\em Mat. Sb.}, 42(2):199--216, 1935.

\bibitem[Zha20]{zhang2020note}
Qi~S. Zhang.
\newblock A note on time analyticity for ancient solutions of the heat
  equation.
\newblock {\em Proc. Amer. Math. Soc.}, 148(4):1665--1670, 2020.

\end{thebibliography}

\end{document}